
\documentclass[10pt]{amsart}
\usepackage{graphicx}
\vfuzz2pt 
\hfuzz2pt 

\usepackage{graphicx}
\usepackage{amssymb}
\usepackage{epstopdf}
\DeclareGraphicsRule{.tif}{png}{.png}{`convert #1 `dirname #1`/`basename #1 .tif`.png}

\newtheorem{thm}{Theorem}[section]
\newtheorem{cor}[thm]{Corollary}

\newtheorem{prop}[thm]{Proposition}
\theoremstyle{definition}

\theoremstyle{remark}
\newtheorem{rem}[thm]{Remark}
\numberwithin{equation}{section}

\title{Euler characteristic numbers of space-like manifolds}
\author{Bing-Long Chen, Kun Zhang}

\begin{document}
\begin{abstract}
In this note, we prove that if a compact even dimensional manifold $M^{n}$ with negative sectional curvature is homotopic to some compact space-like manifold $N^{n}$, then the Euler characteristic number of $M^{n}$ satisfies $(-1)^{\frac{n}{2}}\chi(M^{n})>0$. We also show that the minimal volume conjecture of Gromov is true for all compact even dimensional space-like manifolds.
\end{abstract}
\maketitle

\section{Introduction}

Let $M^{n}$ be a compact even dimensional Riemannian manifold with negative sectional curvature. A long-standing conjecture due to H.~Hopf\cite{hopf25} in differential geometry asks whether the Euler characteristic number of $M^{n}$ satisfies $(-1)^{\frac{n}{2}}\chi(M^{n})>0$.  When $n=4$, the proof was given by Chern\cite{chern55}~(who attributed the result to Milnor) by showing the integrand of Gauss-Bonnet-Chern is positive. However, when $n=6$, some examples show that the integrand does not have a definite sign in general. On the other hand, Gromov in \cite{gro91} proved that Hopf conjecture is true when the manifold is K\"ahler.

In this note, we will consider the Euler characteristic numbers of a class of real Riemannian manifolds. These manifolds are locally embeddable in Lorentz-Minkowski space $R^{n,1}$. In \cite{zhang09}, such $N^{n}$ is called space-like. More precisely, we call a manifold $(N^{n},g)$~(see \cite{zhang09}) space-like if there exists a symmetric $(0,2)$ tensor $h_{ij}$ such that the following two equations are fulfilled
\begin{equation}\label{ge}
R_{ijkl}=-(h_{ik}h_{jl}-h_{il}h_{jk});
\end{equation}
\begin{equation}\label{ce}
\nabla_{i}h_{jk}=\nabla_{j}h_{ik}.
\end{equation}
Here the sign convention for the Riemann curvature tensor $R_{ijkl}$ is made so that $R_{ijij}$ is positive on sphere. Clearly, a space-like $n$-dimensional submanifold of $R^{n,1}$ satisfies the above two equations (1.1) and (1.2) if we take the tensor $h_{ij}$ to be the second fundamental form. A space-like manifold shares some interesting properties of manifolds with non-positive curvature. For example, it can be shown that the universal cover of a space-like manifold is diffeomorphic to the Euclidean space (see Corollary\ref{cor2.2}). The main result of this note is the following theorem:

\begin{thm}\label{thm1}
Let $M^{n}$ be an even dimensional Riemannian manifolds with negative sectional curvature. Suppose $M^{n}$ is homotopic to some compact space-like manifold $N^{n}$. Then the Euler characteristic number of $M^{n}$ satisfies 
$$(-1)^{\frac{n}{2}}\chi(M^{n})>0.$$
\end{thm}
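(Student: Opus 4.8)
The plan is to transport the whole question onto the space-like model $N^n$, where the curvature tensor is algebraically rigid, and to read off the sign of the Euler characteristic from the resulting Gauss--Bonnet--Chern integrand. Since the Euler characteristic is a homotopy invariant, $\chi(M^n)=\chi(N^n)$, so it suffices to prove $(-1)^{n/2}\chi(N^n)>0$. I would start from the Gauss--Bonnet--Chern formula
\[
\chi(N^n)=\frac{1}{(2\pi)^{n/2}}\int_{N^n}\mathrm{Pf}(\Omega),
\]
where $\mathrm{Pf}(\Omega)$ is the Pfaffian of the curvature $2$-form, and exploit the fact that \eqref{ge} forces this integrand to collapse from a generic curvature polynomial to a single determinant of $h$.

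The first concrete step is the algebraic reduction of $\mathrm{Pf}(\Omega)$ under \eqref{ge}. Substituting $R_{ijkl}=-(h_{ik}h_{jl}-h_{il}h_{jk})$ into the alternating sum defining the Pfaffian, each of the $n/2$ curvature factors is a Kulkarni--Nomizu-type product of $h$ with itself carrying a single sign $-1$; the antisymmetrizations then force the free indices to pair up so that the whole expression telescopes into $\det(h)$. This is exactly the computation that identifies the Gauss--Bonnet integrand of a hypersurface with its Gauss--Kronecker curvature, except that the Lorentzian sign in \eqref{ge} contributes an overall factor $(-1)^{n/2}$. I therefore expect
\[
\mathrm{Pf}(\Omega)=(-1)^{n/2}c_n\det(h)\,dV_{N},\qquad c_n>0,
\]
which gives the clean identity
\[
(-1)^{n/2}\chi(N^n)=\frac{c_n}{(2\pi)^{n/2}}\int_{N^n}\det(h)\,dV_{N},
\]
and reduces the theorem to the strict positivity of $\int_{N^n}\det(h)\,dV_N$.

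To handle the sign of $\det(h)$ I would use the pointwise identity that \eqref{ge} imposes on sectional curvatures: for an orthonormal pair spanning a $2$-plane $\pi$, $K(\pi)=R_{ijij}=-\det(h|_\pi)$. Thus the sectional curvature of $N^n$ is negative on $\pi$ exactly when the bilinear form $h$ is definite on $\pi$, so $N^n$ is negatively curved at a point precisely when $h$ is (positive or negative) definite there; because $n$ is even, definiteness yields $\det(h)=\prod_i\lambda_i>0$ irrespective of the common sign of the principal curvatures $\lambda_i$. If I could show that $h$ is definite at every point of $N^n$, the integrand above would be pointwise positive and the theorem would follow at once.

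The crux, and the step I expect to be the main obstacle, is that negative curvature is assumed on $M^n$ and not on $N^n$, while sectional curvature is not a homotopy invariant; I cannot simply assert that $N^n$ inherits definiteness of $h$. Here I would follow Gromov's philosophy from the K\"ahler case and the minimal-volume estimates. Negative sectional curvature of $M^n$ makes the fundamental class have positive simplicial volume, which is shared by the homotopy-equivalent $N^n$; and by Corollary~\ref{cor2.2} the universal cover of $N^n$ is diffeomorphic to $\mathbb{R}^n$, so the Codazzi equation \eqref{ce} can be used to construct a bounded primitive (a transgression of $\det(h)\,dV_N$) on the contractible cover. Pairing the resulting bounded Euler form against the positive simplicial volume should then force $\int_{N^n}\det(h)\,dV_N$ to be strictly positive with the sign matching $(-1)^{n/2}$. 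The technical heart of the proof, and its most delicate point, is precisely this construction of a bounded transgression out of \eqref{ce} together with the verification that the sign of the pairing comes out correctly.
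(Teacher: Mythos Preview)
Your algebraic reduction of the Gauss--Bonnet--Chern integrand to $(-1)^{n/2}c_n\det(h)\,dV_N$ is correct and is exactly the identity \eqref{3.00} the paper uses. You are also right that Gromov's simplicial volume enters: negative curvature on $M^n$ gives positive simplicial volume, which is inherited by $N^n$. But the way you try to extract the sign of $\int_{N^n}\det(h)\,dV_N$ from this is where the argument breaks down. A bounded primitive of the Euler form on the universal cover would show that the Euler class lies in the image of bounded cohomology; this yields an inequality of the type $|\chi(N^n)|\le C\cdot\|N^n\|$, not a sign. Pairing a bounded form against a positive simplicial volume does not force the pairing to be positive---it only forces it to be bounded. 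You also do not indicate how the Codazzi equation \eqref{ce} would actually produce such a primitive with controlled sup norm; this is far from automatic. So the ``bounded transgression'' step, which you yourself flag as the technical heart, is not a viable route as stated.

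The paper's mechanism is entirely different and is the idea you are missing: it runs the intrinsic mean curvature flow \eqref{e2.2} on $(N^n,g,h)$. The flow preserves \eqref{ge}--\eqref{ce}, so \eqref{3.00} holds for all $t$, while the monotonicity formulas of Proposition~\ref{prop2.3} drive the traceless part of $h$ to zero in an integral sense along a sequence $t_k\to\infty$. This forces $\int_{N^n}\det(h)/\det(g)\,dv$ to become $\int_{N^n}(H/n)^n\,dv$ in the limit, and since $n$ is even this is automatically nonnegative; hence $(-1)^{n/2}\chi(N^n)\ge 0$ with no curvature assumption on $N^n$ at all. The strict inequality then comes from the equality characterization in Theorem~\ref{thm2}: if $\chi(N^n)=0$ the flow estimates show the minimal volume of $N^n$ is zero, but Gromov's bound of minimal volume by simplicial volume (and the positivity of the latter, inherited from $M^n$) rules this out. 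So simplicial volume is used, as you anticipated, but only at the very end to upgrade $\ge 0$ to $>0$; the nonnegativity itself comes from the parabolic flow, not from bounded cohomology.
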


Note that in the theorem, we do not assume the curvature of space-like manifold $N^{n}$ has a sign. The curvature sign is only imposed on the manifold $M^{n}$. The motivation for the proof of Theorem 1.1 is from \cite{zhang09}, where the second author studied the hyperbolization problem of space-like manifolds by using the so-called intrinsic mean curvature flow. More precisely, it was shown in \cite{zhang09} that if the manifold is compact and $h_{ij}>0$, then the manifold admits a Riemannian metric of negative constant sectional curvature.

Theorem 1.1 follows from a more general result on space-like manifolds satisfying (1.1) and (1.2):

\begin{thm}\label{thm2}
Let $N^{n}$ be an even dimensional compact space-like manifold, then the Euler characteristic number satisfies
$$(-1)^{\frac{n}{2}}\chi(M^{n})>0.$$
The equality holds if and only if the minimal volume of $N^{n}$ is zero.
\end{thm}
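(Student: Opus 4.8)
The plan is to evaluate $\chi(N)$ through the Gauss--Bonnet--Chern formula and to exploit the special form (\ref{ge}) of the curvature tensor. For $n=2m$, Chern's integrand is the Pfaffian of the curvature, a universal degree-$m$ polynomial in the components $R_{ijkl}$. Substituting (\ref{ge}), every curvature factor is the decomposable tensor $-(h_{ik}h_{jl}-h_{il}h_{jk})$; contracting the $m$ copies against the two antisymmetrising $\varepsilon$-symbols collapses the Pfaffian, by a standard combinatorial identity, to a constant multiple of the determinant of the shape operator $A=(h^{i}_{\ j})$. The $m$ minus signs combine into a global factor $(-1)^{m}$, so that, in an orthonormal frame diagonalising $A$ with principal curvatures $\kappa_{1},\dots,\kappa_{n}$,
\[
(-1)^{\frac n2}\chi(N)=c_{n}\int_{N}\det A\,dV_{g}=c_{n}\int_{N}\kappa_{1}\cdots\kappa_{n}\,dV_{g},\qquad c_{n}>0,
\]
where the sign of $c_{n}$ is fixed by the round-sphere (Euclidean-hypersurface) check $R_{ijkl}=+(h_{ik}h_{jl}-h_{il}h_{jk})$, $\det A>0$, $\chi>0$. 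I expect this first step to be routine: it is purely algebraic, uses only (\ref{ge}), and is independent of the sign of the curvature. The Codazzi equation (\ref{ce}) plays no role here; it enters only in the next step.

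The heart of the matter is to prove that the right-hand integral is nonnegative, and this is where I expect the real difficulty. The pointwise integrand $\kappa_{1}\cdots\kappa_{n}$ need not be signed, since the sectional curvatures $-\kappa_{i}\kappa_{j}$ may be of mixed sign, so the positivity must be global. I would make it geometric as follows. By Corollary \ref{cor2.2} the universal cover $\widetilde N$ is diffeomorphic to $\mathbb R^{n}$; since (\ref{ge}) and (\ref{ce}) are precisely the Gauss and Codazzi integrability conditions for a space-like hypersurface, the fundamental theorem of submanifolds produces an isometric immersion $\widetilde N\hookrightarrow\mathbb R^{n,1}$ with second fundamental form $h$, unique up to Lorentz motions. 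Its future unit-normal (Gauss) map $\nu\colon\widetilde N\to\mathbb H^{n}$ satisfies $d\nu=-A$, hence $\nu^{*}\omega_{\mathbb H}=\det A\,dV_{g}$, and by uniqueness $\nu$ is equivariant for a holonomy representation $\rho\colon\pi_{1}(N)\to\mathrm{Isom}(\mathbb H^{n})=\mathrm{SO}^{+}(n,1)$. Consequently $\det A\,dV_{g}$ descends to $N$ and
\[
\int_{N}\det A\,dV_{g}=\int_{N}\nu^{*}\omega_{\mathbb H}=\operatorname{Vol}(\rho),
\]
the volume of the representation $\rho$, a quantity depending only on the homotopy type of $N$.

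The main obstacle is therefore to show $\operatorname{Vol}(\rho)\ge0$ together with the sharp equality criterion. For this I would argue in bounded cohomology: $\operatorname{Vol}(\rho)=\langle\rho^{*}\omega_{\mathbb H}^{b},[N]\rangle$, where $\omega_{\mathbb H}^{b}$ is the bounded volume cocycle, whence Gromov's estimate gives $|\operatorname{Vol}(\rho)|\le v_{n}\,\|N\|_{\Delta}$ with $v_{n}$ the volume of the regular ideal simplex. Straightening $\nu$ geodesically and using that $\rho$ takes values in the \emph{time- and orientation-preserving} group $\mathrm{SO}^{+}(n,1)$ should pin down the sign and yield $\operatorname{Vol}(\rho)\ge0$; I anticipate this causal-orientation bookkeeping to be the delicate point. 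For the equality statement, one direction is Gromov's inequality $|\chi(N)|\le \mathrm{const}(n)\,\mathrm{MinVol}(N)$, so $\mathrm{MinVol}(N)=0$ forces $\chi(N)=0$ and hence equality. For the converse I would show that $\operatorname{Vol}(\rho)=0$ forces the holonomy to be elementary, so that its geodesic straightening degenerates, and then convert this degeneracy into a polarised $F$-structure on $N$, producing metrics of bounded curvature and arbitrarily small volume, i.e.\ $\mathrm{MinVol}(N)=0$. Turning the vanishing of a representation volume into an actual collapse is the step I expect to be hardest, and the one that genuinely uses the space-like hypothesis through the structure of $\rho$.
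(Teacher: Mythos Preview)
Your opening step---reducing $(-1)^{n/2}\chi(N)$ to $c_n\int_N\det A\,dv$ via Gauss--Bonnet--Chern and the Gauss equation \eqref{ge}---is exactly what the paper does in \eqref{3.0}--\eqref{3.00}. After that the two arguments diverge completely.

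The paper never touches the Gauss map, representation volumes, or bounded cohomology. Instead it runs the intrinsic mean curvature flow \eqref{e2.2}, under which the Gauss--Codazzi system is preserved, and uses the monotonicity of $\int H^n\,dv$ and $\int|A|^n\,dv$ (Proposition~\ref{prop2.3}) to find times $t_k\to\infty$ along which $\int_N\bigl|\det A-(H/n)^n\bigr|\,dv\to0$. Since $\chi(N)$ is $t$-independent, this gives
\[
(-1)^{n/2}\chi(N)=c_n\lim_{k\to\infty}\int_N\Bigl(\frac{H}{n}\Bigr)^{n}\,dv\Big|_{t=t_k}\ \ge\ 0,
\]
the last inequality being \emph{pointwise} because $n$ is even. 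For the equality case, $\chi=0$ forces $\int H^n\,dv\to0$, whence $\mathrm{Vol}(N,t_k)/t_k^{n/2}\to0$ by \eqref{e2.8}; combined with the curvature bound $|Rm(g_{t_k})|\le C/t_k$ coming from \eqref{2.4} and \eqref{ge}, the rescaled metrics $C\,g_{t_k}/t_k$ have $|K|\le1$ and volume tending to zero, so $\mathrm{MinVol}(N)=0$ directly. No representation theory, no $F$-structures.

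Your proposed route has a genuine gap precisely where you flag it. Writing $\int_N\det A\,dv=\mathrm{Vol}(\rho)$ for the holonomy $\rho:\pi_1(N)\to SO^{+}(n,1)$ is fine, but you then need $\mathrm{Vol}(\rho)\ge0$, and nothing you say establishes it. Straightening yields only the two-sided bound $|\mathrm{Vol}(\rho)|\le v_n\|N\|_\Delta$; the fact that $\rho$ lands in the time-orientation-preserving component constrains the target, not the orientation behaviour of the equivariant map $\nu$, so ``causal-orientation bookkeeping'' is a hope rather than an argument. The equality direction is worse: ``$\mathrm{Vol}(\rho)=0\Rightarrow\rho$ elementary'' is simply not a theorem (already for $n=3$ there are non-elementary representations---e.g.\ Fuchsian ones---with zero volume), and even granting it, producing a polarised $F$-structure on $N$ from an elementary holonomy is another substantial and unproved step. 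By contrast, the paper's flow manufactures the collapsing sequence of metrics explicitly, so the hardest step in your outline is bypassed entirely.
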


According to Gromov\cite{gro82}, the minimal volume of a manifold $N^{n}$ is the infimum of all volumes $vol(N^{n},g')$, where $g'$ ranges over all Riemannian metrics with sectional curvatures satisfying $|K_{g'}|\leq 1$. The minimal volume conjecture of Gromov \cite{gro82} is asking whether there is a number $\varepsilon(n)$ depending only on the dimension $n$ such that min vol$(N^{n})<\varepsilon(n)$ implies min vol~$(N^{n})=0$. This conjecture was already verified by X. C. Rong\cite{rong93} in dimension 4. A byproduct of Theorem \ref{thm2} is 
\begin{cor}
The minimal volume conjecture is true for all compact even dimensional space-like manifolds.
\end{cor}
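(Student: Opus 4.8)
The plan is to derive the corollary from Theorem~\ref{thm2} together with the classical linear bound of the Euler characteristic by the minimal volume. First I would recall Gromov's inequality: there is a constant $C(n)$, depending only on $n$, such that $|\chi(N^{n})|\leq C(n)\cdot\mathrm{minvol}(N^{n})$. This follows directly from the Gauss--Bonnet--Chern formula $\chi(N^{n})=\int_{N^{n}}\mathrm{Pf}(\Omega)$, in which the Pfaffian integrand is a universal polynomial in the components of the curvature tensor. For any metric $g'$ with $|K_{g'}|\leq 1$ the curvature is pointwise bounded by a dimensional constant, so the integrand is bounded in absolute value by some $C(n)$; integrating gives $|\chi(N^{n})|\leq C(n)\cdot\mathrm{vol}(N^{n},g')$, and taking the infimum over all admissible $g'$ yields the stated inequality.

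With this in hand, I would set $\varepsilon(n)=C(n)^{-1}$ and argue as follows. Suppose $\mathrm{minvol}(N^{n})<\varepsilon(n)$. Then $|\chi(N^{n})|\leq C(n)\cdot\mathrm{minvol}(N^{n})<1$, and since $\chi(N^{n})$ is an integer this forces $\chi(N^{n})=0$. In particular $(-1)^{n/2}\chi(N^{n})=0$, so the equality case in Theorem~\ref{thm2} applies and gives $\mathrm{minvol}(N^{n})=0$. Thus the implication $\mathrm{minvol}(N^{n})<\varepsilon(n)\Rightarrow\mathrm{minvol}(N^{n})=0$ holds, which is exactly Gromov's minimal volume conjecture for $N^{n}$.

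The conceptual heavy lifting is already contained in Theorem~\ref{thm2}, whose equality clause converts the purely homotopy-theoretic vanishing $\chi(N^{n})=0$ into the geometric conclusion $\mathrm{minvol}(N^{n})=0$; this is the step I expect to be the crux, and it is precisely what the space-like hypothesis buys us. The remaining ingredients---the Gromov bound and the integrality of $\chi$---are routine, the only mild care being an explicit tracking of the dimensional constant $C(n)$ so that $\varepsilon(n)$ depends on $n$ alone. No continuity or compactness argument in the space of metrics is required: the gap in the range of $|\chi|$ at the integer $0$ does all the work.
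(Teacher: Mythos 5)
Your proof is correct and is precisely the argument the paper intends: the corollary is stated as an immediate byproduct of Theorem~\ref{thm2}, and the glue you supply is exactly the missing routine step. Namely, the Gauss--Bonnet--Chern bound $|\chi(N^{n})|\leq C(n)\cdot\mathrm{min\,vol}(N^{n})$ together with integrality of $\chi$ forces $\chi(N^{n})=0$ once $\mathrm{min\,vol}(N^{n})<C(n)^{-1}$, and the equality clause of Theorem~\ref{thm2} then yields $\mathrm{min\,vol}(N^{n})=0$.
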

The proof of above Theorems \ref{thm1} and \ref{thm2} is an elementary application of the mean curvature flow. The detail will be given in the following two sections.

\section{Mean curvature flow}

In this section, we assume that $(N^{n},g)$ is a space-like manifold, i.e. there is a tensor $h_{ij}$ such that equation (\ref{ge}) and (\ref{ce}) hold. In \cite{zhang09}, the second author studied the following flow:
 \begin{equation} \label{e2.1}
 \begin{split}
\frac{\partial g_{ij}}{\partial t}&=-2R_{ij}+2h_{ik}h_{jl}g^{kl},\\
\frac{\partial h_{ij}}{\partial t}&= \triangle h_{ij}-R_{im}h_{nj}g^{mn}-R_{jm}h_{ni}g^{mn}\\ & \ \ \ \ +2h_{im}h_{jn}h_{kl}g^{mk}g^{ln}-h_{mn}h_{kl}g^{mk}g^{nl}h_{ij}.
\end{split}
\end{equation}
It was shown in \cite{zhang09}, when $(N^{n},g)$ is compact, equation (\ref{e2.1}) admits a smooth solution for any initial data $(g_0,h_0).$ Moreover, if equations (\ref{ge}) and (\ref{ce}) hold at time $t=0,$ then they also continue to  hold for  time $t>0.$ That is to say, $(N^n,g(t),h(t))$ will remain to be a space-like manifold under the deformation (\ref{e2.1}). In this case,  equation (\ref{e2.1}) may be simplified:
 \begin{equation}\label{e2.2}
\begin{split}
\frac{\partial g_{ij}}{\partial t}&=2Hh_{ij}  \\
\frac{\partial h_{ij}}{\partial t} &=\triangle h_{ij}+2Hh_{im}h_{jn}g^{mn}-|A|^2h_{ij} \\
\end{split}
\end{equation}
 where $H=g^{ij}h_{ij}, |A|^2=g^{ij}g^{kl}h_{ik}h_{jl}.$

We may call equation (\ref{e2.1})  an  intrinsic mean curvature flow. Equations (\ref{ge}) and (\ref{ce}) may be called Gauss and Codazzi equations respectively.

Another approach to solve equation (\ref{e2.1}) or (\ref{e2.2}) is to embed the universal cover $(\tilde{N},\tilde{g})$ into $R^{n,1}$ as a space-like submanifold $\Sigma$ in the usual sense, deform $\Sigma$ in $R^{n,1}$ by the mean curvature flow and prove the mean curvature flow is invariant under the deck transformation.
\begin{prop}\label{prop2.1}
$\tilde{N}$ admits an isometric embedding into $R^{n,1}$ as a space-like submanifold with second fundamental form $h_{ij}$ given in (\ref{ge}) and (\ref{ce}).
\end{prop}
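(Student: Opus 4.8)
The plan is to prove Proposition 2.1 as an instance of the fundamental (Bonnet-type) theorem for hypersurfaces, with target the flat Lorentzian space form $R^{n,1}$, and then to upgrade the resulting isometric immersion to an embedding using completeness. The decisive observation is that equations (\ref{ge}) and (\ref{ce}) are precisely the Gauss and Codazzi equations of a space-like hypersurface of $R^{n,1}$: for such a hypersurface the unit normal $\nu$ is timelike, $\langle\nu,\nu\rangle=-1$, and flatness of the ambient space forces $R_{ijkl}=\langle\nu,\nu\rangle(h_{ik}h_{jl}-h_{il}h_{jk})=-(h_{ik}h_{jl}-h_{il}h_{jk})$ together with $\nabla_i h_{jk}=\nabla_j h_{ik}$. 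Thus the minus sign in (\ref{ge}) is exactly the signature of a space-like hypersurface, and $h_{ij}$ is destined to be its second fundamental form.

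First I would construct the virtual ambient connection. Form the rank-$(n+1)$ bundle $E=T\tilde N\oplus L$ over $\tilde N$, where $L$ is a trivial line bundle with a formal timelike section $\nu$ satisfying $\langle\nu,\nu\rangle=-1$ and orthogonal to $T\tilde N$, and equip $E$ with the Lorentzian fiber metric built from $g$ on $T\tilde N$ and this normalization on $L$. Define a metric connection $D$ on $E$ by the Gauss and Weingarten formulas $D_X Y=\nabla_X Y+h(X,Y)\nu$ and $D_X\nu=AX$, where $\nabla$ is the Levi-Civita connection of $g$ and $A$ is the shape operator determined by $g(AX,Y)=h(X,Y)$; one checks directly that these formulas make $D$ compatible with the fiber metric. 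A computation of the curvature $R^D$ then splits into a tangential part, a mixed part, and a normal part: the tangential part vanishes exactly when (\ref{ge}) holds, the mixed part vanishes exactly when the Codazzi identity (\ref{ce}) holds, and the normal part is automatic. Since $N$ is space-like, both (\ref{ge}) and (\ref{ce}) hold, so $D$ is flat.

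Next I would integrate. Because $\tilde N$ is simply connected, the flat metric bundle $(E,\langle\cdot,\cdot\rangle,D)$ is trivial: parallel transport produces a global $D$-parallel Lorentz frame, hence a fiberwise isometry $\Phi\colon E\to\tilde N\times R^{n,1}$ intertwining $D$ with the trivial connection $d$. Restricting $\Phi$ to the tangential summand yields an $R^{n,1}$-valued $1$-form $\omega=\Phi|_{T\tilde N}$; using the Gauss formula together with the symmetry of $h$ and the torsion-freeness of $\nabla$ one checks $d\omega=0$, so on the simply connected $\tilde N$ we may write $\omega=dF$ for a map $F\colon\tilde N\to R^{n,1}$. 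By construction $dF$ carries each tangent space isometrically onto a space-like subspace, $\Phi(\nu)$ is the timelike unit normal, and unwinding the definition of $D$ shows that the second fundamental form of $F$ is $h$. Thus $F$ is a space-like isometric immersion realizing (\ref{ge}) and (\ref{ce}).

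The last, and to my mind the only genuinely geometric, step is to promote $F$ from immersion to embedding. Since $N^n$ is compact, $(\tilde N,g)$ is a complete simply connected manifold. Let $\pi\colon R^{n,1}\to R^n$ be orthogonal projection onto the space-like hyperplane and set $\phi=\pi\circ F$. Because $F$ is space-like, $dF(T_p\tilde N)$ contains no timelike direction, so it meets the kernel of $d\pi$ trivially and $\phi$ is a local diffeomorphism; writing $F_*v=(d\phi(v),\tau)$ the isometry relation gives $|d\phi(v)|^2_{\mathrm{eucl}}=g(v,v)+\tau^2\ge g(v,v)$, so $\phi$ is distance-nondecreasing. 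A distance-nondecreasing local diffeomorphism out of a complete manifold has the path-lifting property and is therefore a covering map, and a covering of the simply connected $R^n$ is a global diffeomorphism; hence $F(\tilde N)$ is the graph of a function over $R^n$ and $F$ is an embedding. I expect the curvature computation in the construction step to be routine bookkeeping, while the point to get right is the completeness-plus-expansion argument forcing the spatial projection to be a diffeomorphism — this is where the compactness of $N$ (equivalently, completeness of $\tilde N$) is actually used.
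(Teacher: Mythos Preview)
Your proof is correct and follows essentially the same route as the paper. Your detailed construction of the flat Lorentzian connection on $T\tilde N\oplus L$ and its integration to an immersion $F$ is exactly what the paper compresses into the phrase ``by a monodromy argument,'' and your embedding step---projecting to a spatial hyperplane, observing $\tilde g\le\phi^*g_0$, and using completeness to conclude the projection is a covering map onto $R^n$---is identical to the paper's argument.
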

\begin{proof}
By a monodromy argument, there is a smooth isometric immersion $\varphi: (\tilde{N},\tilde{g})\rightarrow R^{n,1}$ with $h_{ij}$ as the second fundamental form. Let $\pi:R^{n,1}\rightarrow R^{n}$ be the projection to an $n$-coordinate plane, and $\psi=\pi\circ\varphi$. Let $g_{0}$ be the Euclidean metric in $R^{n}$, then it is not hard to see $\tilde{g}\leq\psi^{*}g_{0}$. This implies $\psi$ is proper, hence a covering map to $R^{n}$. From this, we know $\varphi$ is an embedding.
\end{proof}
\begin{cor}\label{cor2.2}
The universal cover $\tilde{N}$ is diffeomorphic to the Euclidean space.
\end{cor}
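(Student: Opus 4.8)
The plan is to read the conclusion straight off the covering map already produced in the proof of Proposition~\ref{prop2.1}. There the composition $\psi=\pi\circ\varphi\colon\tilde N\to R^{n}$ with a coordinate-plane projection $\pi$ was shown to satisfy $\tilde g\le\psi^{*}g_{0}$, which forces $\psi$ to be proper; since $\varphi$ is a space-like immersion, $\psi$ is also a local diffeomorphism, and a proper local diffeomorphism between connected boundaryless smooth manifolds is a smooth covering map onto $R^{n}$. So the analytic work is entirely absorbed into the previous proposition, and nothing new of that kind is required here.

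The key step is then purely topological: I would invoke the uniqueness of simply connected covering spaces. Because $\tilde N$ is a universal cover it is connected, and $R^{n}$ is simply connected, so any covering map $\psi\colon\tilde N\to R^{n}$ must be single-sheeted, i.e.\ a bijective local diffeomorphism. A bijective local diffeomorphism is a global diffeomorphism, whence $\psi$ identifies $\tilde N$ with $R^{n}$. This yields the stated diffeomorphism $\tilde N\cong R^{n}$.

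I do not expect a genuine obstacle, since the substantive content (properness via $\tilde g\le\psi^{*}g_{0}$, and the embedding of $\tilde N$ into $R^{n,1}$) is supplied by Proposition~\ref{prop2.1}. The only points that merit a careful sentence are that $\psi$ is indeed a local diffeomorphism and not merely an immersion, so that the proper map is genuinely a covering, and that the connectedness of $\tilde N$ together with the simple connectedness of $R^{n}$ forces the covering to be trivial rather than merely finite-sheeted. Once these are noted, the corollary follows immediately.
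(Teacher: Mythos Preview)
Your proposal is correct and follows exactly the route the paper intends: the corollary is stated without proof immediately after Proposition~\ref{prop2.1}, and the implicit argument is precisely yours---the map $\psi$ constructed there is a covering of the simply connected $R^{n}$, hence a diffeomorphism. Your added remarks (that $\tilde g\le\psi^{*}g_{0}$ forces $d\psi$ to be injective, so $\psi$ is a local diffeomorphism and not merely smooth; and that connectedness of $\tilde N$ makes the covering single-sheeted) are the right details to spell out.
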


Now let $(N^{n},g)$ be a compact space-like manifold, we deform $(g,h)$ by (\ref{e2.1}) or (\ref{e2.2}).

From (\ref{e2.2}), it is not hard to show
\begin{equation}\label{e2.3}
\begin{split}
\frac{\partial H}{\partial t} &= \triangle H-H|A|^2 \\
\frac{\partial |A|^2}{\partial t}&=\triangle |A|^2-2|\nabla A|^2-2|A|^4
\end{split}
\end{equation}
where $|\nabla A|^2=g^{ij}g^{kl}g^{pq}\nabla_ih_{kp}\nabla_{j}h_{lq}.$

Based on (\ref{e2.3}), it can be shown that the solution $(g(t),h(t))$ of equation (\ref{e2.1}) (or (\ref{e2.2})) always exists for all time $0\leq t<\infty,$  and $h_{ij}$ satisfies the estimate
\begin{equation}\label{2.4}
  0\leq|A|^2\leq\frac{1}{2t+1/|A|_{max}(0)}.
\end{equation}

In the following propositions, we assume the dimension $n$ to be even.\\

First, we derive two monotonicity formulas for the intrinsic mean curvature flow. We have to mention that all the quantities in the following propositions involving the norm and the volume element $dv$ are computed at time $t$ with the evolving metric $g(t).$

\begin{prop}\label{prop2.3} 
\begin{equation*}
    \frac{d}{dt}\int_{N^{n}}H^{n}dv=-n(n-1)\int_{N^{n}}|\nabla H|^2H^{n-2}dv-n\int_{N^{n}}H^{n}|h_{ij}-\frac{H}{n}g_{ij}|^2dv\leq 0.
\end{equation*}
and
\begin{equation*}
\begin{split}
\frac{d}{dt}\int_{N^{n}}|A|^{n}dv
=&-(\frac{n}{2}-1)\frac{n}{2}\int_{N^{n}}|A|^{(n-4)}|\nabla|A|^2|^2dv
-n\int_{N^{n}}|A|^{n-2}|\nabla A|^2dv\\
&-n\int_{N^{n}}|A|^{n}|h_{ij}-\frac{H}{n}g_{ij}|^2dv\leq 0.
\end{split}
\end{equation*}
\end{prop}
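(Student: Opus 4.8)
The plan is to differentiate each integral directly, using the evolution equations already recorded in (\ref{e2.3}) together with the evolution of the volume element, and then to integrate by parts and reorganize the zeroth-order terms by a single algebraic identity. The first step, common to both formulas, is to compute how $dv$ changes. Since $\frac{\partial g_{ij}}{\partial t}=2Hh_{ij}$, the standard formula $\frac{\partial}{\partial t}dv=\frac12 g^{ij}\frac{\partial g_{ij}}{\partial t}\,dv$ gives $\frac{\partial}{\partial t}dv=H\,g^{ij}h_{ij}\,dv=H^2\,dv$. The second ingredient I would isolate up front is the pointwise identity for the trace-free part of $h$: expanding $|h_{ij}-\frac{H}{n}g_{ij}|^2$ and using $g^{ij}h_{ij}=H$ and $g^{ij}g_{ij}=n$ yields $|h_{ij}-\frac{H}{n}g_{ij}|^2=|A|^2-\frac{H^2}{n}$. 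This one identity is what converts the leftover zeroth-order terms into the squared trace-free expression appearing in the statement.

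For the first formula I would write $\frac{d}{dt}\int_{N^n}H^n\,dv=\int nH^{n-1}\frac{\partial H}{\partial t}\,dv+\int H^n\cdot H^2\,dv$. Substituting $\frac{\partial H}{\partial t}=\triangle H-H|A|^2$ and integrating by parts $\int nH^{n-1}\triangle H\,dv=-n(n-1)\int H^{n-2}|\nabla H|^2\,dv$ leaves exactly $-n(n-1)\int H^{n-2}|\nabla H|^2\,dv-n\int H^n|A|^2\,dv+\int H^{n+2}\,dv$. The last two terms equal $-n\int H^n\big(|A|^2-\tfrac{H^2}{n}\big)\,dv=-n\int H^n|h_{ij}-\frac{H}{n}g_{ij}|^2\,dv$ by the identity above, giving the claimed expression.

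For the second formula the only new wrinkle is writing $|A|^n=(|A|^2)^{n/2}$ and applying the chain rule, so that $\frac{\partial}{\partial t}|A|^n=\frac n2(|A|^2)^{n/2-1}\frac{\partial|A|^2}{\partial t}$. Substituting $\frac{\partial|A|^2}{\partial t}=\triangle|A|^2-2|\nabla A|^2-2|A|^4$ and integrating by parts $\frac n2\int(|A|^2)^{n/2-1}\triangle|A|^2\,dv=-\frac n2(\frac n2-1)\int(|A|^2)^{n/2-2}|\nabla|A|^2|^2\,dv$ produces the first two gradient terms directly, upon recording $(|A|^2)^{n/2-2}=|A|^{n-4}$ and $(|A|^2)^{n/2-1}=|A|^{n-2}$. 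The remaining zeroth-order terms $-n\int|A|^{n+2}\,dv+\int|A|^n H^2\,dv$ again collapse, via the trace-free identity, into $-n\int|A|^n|h_{ij}-\frac{H}{n}g_{ij}|^2\,dv$.

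Finally, for the sign: because $n$ is even, all the powers $H^{n-2}$, $H^n$, $|A|^{n-4}$, $|A|^{n-2}$, $|A|^n$ are non-negative, and the coefficient $(\frac n2-1)\frac n2$ is non-negative for $n\geq2$ (it vanishes at $n=2$, where the term is harmlessly absent), so every integrand is non-negative and both derivatives are $\leq 0$. I do not expect a genuine obstacle here; the argument is routine once the volume-element derivative and the trace-free identity are in hand. The only point requiring care is tracking the exponents correctly through the chain rule and integration by parts in the $|A|^n$ computation, where the factor $(\frac n2-1)$ arises precisely because one power of $|A|^2$ has already been consumed by the outer $\frac n2$.
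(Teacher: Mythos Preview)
Your proposal is correct and is precisely the ``direct calculations by using equations (\ref{e2.3})'' that the paper invokes as its proof. Every step---the volume-element evolution $\partial_t\,dv=H^2\,dv$, the integrations by parts, and the trace-free identity $|h_{ij}-\tfrac{H}{n}g_{ij}|^2=|A|^2-\tfrac{H^2}{n}$---is exactly what the paper has in mind, so there is nothing to add.
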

\begin{proof}
The proof is direct calculations by using equations (\ref{e2.3}).
\end{proof}

\begin{prop}\label{prop2.4}
There are constant $C_0>0$  and a sequence of times $t_k\rightarrow \infty$ such that
\begin{equation}\label{2.5}
0\leq\int_{N^{n}}|A|^{n}dv<C_0 ,
\end{equation}
for all time $t\geq 0,$ and
\begin{equation}\label{2.6}
t_k\cdot \int_{N^{n}}|A|^{n}|h_{ij}-\frac{H}{n}g_{ij}|^2dv\mid_{t=t_k}\rightarrow 0 \quad as \quad k\rightarrow \infty.
\end{equation}
\end{prop}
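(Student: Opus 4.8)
The plan is to extract both claims of Proposition 2.4 from the monotonicity formulas of Proposition 2.3 together with the decay estimate \eqref{2.4}. Looking at the second formula in Proposition 2.3, every term on the right-hand side is non-positive, so $t\mapsto\int_{N^n}|A|^n\,dv$ is non-increasing. Hence for all $t\ge 0$ it is bounded above by its value at $t=0$, and I would simply set
$$C_0:=\int_{N^n}|A|^n\,dv\Big|_{t=0}+1,$$
which gives \eqref{2.5} immediately (the lower bound $0$ being trivial since $|A|^2\ge 0$). This handles the uniform bound with essentially no work.

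For the second claim \eqref{2.6}, the idea is a standard integral argument: if a non-increasing, non-negative quantity has a bounded integral of its derivative, then the ``excess'' term it dominates must go to zero along a subsequence, and in fact fast enough to beat a factor of $t$. Concretely, I would integrate the monotonicity formula from $0$ to $\infty$. Because $\int_{N^n}|A|^n\,dv$ is non-negative and non-increasing, it converges as $t\to\infty$, so the total integral of minus the right-hand side is finite; in particular
$$\int_0^\infty\!\!\int_{N^n}|A|^n\Big|h_{ij}-\tfrac{H}{n}g_{ij}\Big|^2\,dv\,dt<\infty.$$
Denote $f(t):=\int_{N^n}|A|^n|h_{ij}-\tfrac{H}{n}g_{ij}|^2\,dv\ge 0$, so that $\int_0^\infty f(t)\,dt<\infty$.

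The remaining step is to upgrade finiteness of $\int_0^\infty f\,dt$ to the existence of times $t_k\to\infty$ with $t_k f(t_k)\to 0$. The natural device is to integrate against $dt/t$ rather than $dt$. I would observe that if $\liminf_{t\to\infty} t f(t)=\delta>0$, then $f(t)\ge \delta/(2t)$ for all large $t$, whence $\int f(t)\,dt$ diverges like $\int dt/t$, contradicting $\int_0^\infty f<\infty$. Therefore $\liminf_{t\to\infty} t f(t)=0$, and choosing a sequence $t_k\to\infty$ realizing this liminf yields exactly \eqref{2.6}. I expect this final step to be the only genuinely substantive point, though it is a routine real-analysis argument; the main subtlety to state carefully is that we obtain the conclusion merely along a subsequence $t_k$ (one cannot expect $tf(t)\to 0$ for the full flow), which is precisely why the proposition is phrased in terms of a sequence of times rather than a limit. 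The decay estimate \eqref{2.4} is not strictly needed for either conclusion here, but it reassures us that $|A|^2$, and hence $f$, stays controlled as $t\to\infty$.
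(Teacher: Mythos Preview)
Your proposal is correct and follows essentially the same route as the paper: use the second monotonicity formula of Proposition~\ref{prop2.3} to get both the uniform bound \eqref{2.5} (by monotonicity of $\int|A|^n\,dv$) and the finiteness of $\int_0^\infty f(t)\,dt$, then argue by contradiction that $\liminf_{t\to\infty} t f(t)=0$ since otherwise $f(t)\gtrsim 1/t$ would violate integrability. The paper's proof is the same argument compressed into a few lines (with a minor misreference to the ``first'' formula and an odd time interval, but the content is identical to yours).
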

\begin{proof}
Integrating the time in the first formula of Proposition \ref{prop2.3} from $-\infty$ to $0$, we have
\begin{equation}\label{2.7}\int_{-\infty}^{0}\int_{N^{n}}|A|^{n}|h_{ij}-\frac{H}{n}g_{ij}|^2dV\leq C_0.\end{equation}
If (\ref{2.6}) does not hold, then there is $C>0$ such that for all $t>0,$ we have $$\int_{N^{n}}|A|^{n}|h_{ij}-\frac{H}{n}g_{ij}|^2dv>\frac{1}{t+C},$$ which is a contradiction with (\ref{2.7}).
\end{proof}

Since
$$
  \frac{d}{dt}Vol(N^n,t)=\int_{N^n}H^2dv
  \leq \Big(\int_{N^n}|H|^ndv\Big)^{\frac{2}{n}}(Vol(N^n,t))^{1-\frac{2}{n}},
$$
we have \begin{equation}\label{2.8'}
  \frac{d}{dt}Vol(N^n,t)^{\frac{2}{n}}\leq \frac{2}{n}(\int_{N^n}|H|^ndv)^{\frac{2}{n}}.\end{equation}
 This implies
\begin{prop}
There is a  constant $C_1>0$ such that for all $t>0$
\begin{equation}\label{e2.9}
  Vol(N^n,t)\leq C_1(t+1)^{\frac{n}{2}},
\end{equation}
\begin{equation}\label{2.10}
  \frac{1}{1+t}\int_{N^n}|A|^{n-2}dV\leq C_1.
\end{equation}
Moreover when n is even
\begin{equation}\label{e2.8}
 \limsup_{t\rightarrow \infty} \frac{Vol(N^n,t)}{(1+t)^{\frac{n}{2}}}\leq (\frac{2}{n})^{\frac{n}{2}}\lim_{t\rightarrow \infty} \int_{N^n}|H|^ndv.
\end{equation}
\end{prop}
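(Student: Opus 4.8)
The plan is to deduce all three estimates from a single ingredient together with the inequality (\ref{2.8'}). That ingredient is the elementary pointwise comparison between $H$ and $|A|$: working in a $g$-orthonormal frame that diagonalizes $h$ with eigenvalues $\lambda_1,\dots,\lambda_n$, we have $H=\sum_i\lambda_i$ and $|A|^2=\sum_i\lambda_i^2$, so Cauchy--Schwarz gives $H^2\le n|A|^2$, hence $|H|^n\le n^{n/2}|A|^n$ pointwise. Integrating this and invoking the uniform bound (\ref{2.5}) from Proposition \ref{prop2.4} yields
$$\int_{N^n}|H|^n\,dv\le n^{n/2}\int_{N^n}|A|^n\,dv< n^{n/2}C_0$$
for all $t\ge 0$. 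Thus the right-hand coefficient in (\ref{2.8'}) is controlled uniformly in time.

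For (\ref{e2.9}) I would feed this into (\ref{2.8'}): since $\big(\int_{N^n}|H|^n\,dv\big)^{2/n}\le C'$ for a constant $C'$ independent of $t$, the inequality (\ref{2.8'}) becomes $\tfrac{d}{dt}Vol(N^n,t)^{2/n}\le \tfrac{2}{n}C'$. Integrating from $0$ to $t$ gives $Vol(N^n,t)^{2/n}\le Vol(N^n,0)^{2/n}+\tfrac{2}{n}C't\le \tilde C(1+t)$, and raising to the power $n/2$ produces (\ref{e2.9}). For (\ref{2.10}) I would rewrite the decay (\ref{2.4}) as $|A|^2\le c/(1+t)$ for a suitable constant $c$, so that $|A|^{n-2}\le c^{(n-2)/2}(1+t)^{-(n-2)/2}$ pointwise. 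Integrating over $N^n$ and inserting the volume bound (\ref{e2.9}) gives
$$\int_{N^n}|A|^{n-2}\,dv\le c^{(n-2)/2}(1+t)^{-(n-2)/2}\,Vol(N^n,t)\le c^{(n-2)/2}C_1(1+t),$$
and dividing by $1+t$ yields (\ref{2.10}).

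The estimate (\ref{e2.8}) is where I would spend the most care. By the first monotonicity formula in Proposition \ref{prop2.3}, $\int_{N^n}H^n\,dv$ is non-increasing; since $n$ is even this equals $\int_{N^n}|H|^n\,dv\ge 0$, so it converges to a finite limit $L:=\lim_{t\to\infty}\int_{N^n}|H|^n\,dv$. Fixing $\varepsilon>0$ and choosing $T$ with $\int_{N^n}|H|^n\,dv\le L+\varepsilon$ for $t\ge T$, I would integrate (\ref{2.8'}) from $T$ to $t$ to get $Vol(N^n,t)^{2/n}\le Vol(N^n,T)^{2/n}+\tfrac{2}{n}(L+\varepsilon)^{2/n}(t-T)$. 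Dividing by $t$ and letting $t\to\infty$ gives $\limsup_{t\to\infty}Vol(N^n,t)^{2/n}/t\le \tfrac{2}{n}(L+\varepsilon)^{2/n}$; then letting $\varepsilon\to 0$, replacing $t$ by $1+t$ (which does not change the limsup), and raising to the power $n/2$ yields (\ref{e2.8}).

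The only genuine subtlety is the last step. One must use that $\int_{N^n}|H|^n\,dv$ \emph{decreases to} its limit $L$ (not merely that it is bounded), so the time-dependent coefficient in (\ref{2.8'}) can be replaced by the constant $L+\varepsilon$ on $[T,\infty)$; and one must justify interchanging $\limsup$ with the power $n/2$, which is legitimate because $x\mapsto x^{n/2}$ is continuous and non-decreasing on $[0,\infty)$. Everything else is a routine integration of differential inequalities.
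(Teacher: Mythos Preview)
Your proof is correct and follows essentially the same route as the paper, which integrates (\ref{2.8'}) and invokes the monotonicity of $\int_{N^n}H^n\,dv$ from Proposition~\ref{prop2.3}; you in fact supply more detail for (\ref{e2.9}) and (\ref{2.10}) than the paper does. One small simplification: the Cauchy--Schwarz detour through (\ref{2.5}) is unnecessary, since for even $n$ the monotonicity of $\int_{N^n}H^n\,dv$ already gives the uniform bound $\int_{N^n}|H|^n\,dv\le \int_{N^n}|H|^n\,dv\big|_{t=0}$ directly.
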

\begin{proof}
 From \eqref{2.8'} we have
\begin{equation*}
\begin{split}
   \frac{Vol(N^n,t)^{\frac{2}{n}}-Vol(N^n,0)^{\frac{2}{n}}}{t+1}&\leq\frac{2}{n}\frac{\int_0^t (\int_{N^n}|H|^ndv)^{\frac{2}{n}}dt}{t+1}.
\end{split}
\end{equation*}
Since $n$ is even, and $\int_{N^n}|H|^ndv$ is monotonically decreasing from the first formula in Proposition 2.1,
 we know
\begin{equation*}
  \limsup_{t\rightarrow \infty} \frac{Vol(N^n,t)^{\frac{2}{n}}}{1+t}\leq \frac{2}{n}\lim_{t\rightarrow \infty} (\int_{N^n}|H|^ndv)^\frac{2}{n}.
\end{equation*}
\end{proof}

\section{Proof of the main theorem}

To prove Theorem \ref{thm2}, we may assume $N^n$ is orientable.  It is well-known that by Gauss-Bonnet-Chern theorem, the Euler Characteristic number $\chi(N^n)$ may be expressed as a curvature integral (see Chern \cite{chern44}):
\begin{equation}\label{3.0'}
\int_{N^n} \Omega=\chi(N^n)
\end{equation}
where $$\Omega=\frac{1}{2^n\pi^{\frac{n}{2}}(\frac{n}{2})!}\sum \varepsilon^{i_1,i_2,\cdots,i_n}\Omega_{i_1i_2}\wedge\cdots \Omega_{i_{n-1}i_n},$$ and $\Omega_{i_1i_2}=R_{i_1i_2 j_1j_2}dx^{j_1}\wedge dx^{j_2}$ is the curvature form.
From equation (\ref{ge}) and direct calculations,  it follows \begin{equation}\label{3.0}
\Omega=(-1)^{\frac{n}{2}}\frac{\Gamma(\frac{n+1}{2})}{\pi^{\frac{n+1}{2}}}\frac{\det(h)}{\det(g)}dv=(-1)^{\frac{n}{2}}
\frac{2}{vol(S^{n})}\frac{\det(h)}{\det(g)}dv,
 \end{equation}
 and
 \begin{equation}\label{3.00}
\chi(N^n)=(-1)^{\frac{n}{2}}
\frac{2}{vol(S^{n})}\int_{N^n}\frac{\det(h)}{\det(g)}dv.
 \end{equation}

 We remark that equation (\ref{3.00}) holds for any $t>0,$ since equation (\ref{ge}) holds for any time $t>0.$

 For any fixed $p\in N^{n}$, choose an orthonormal frame $e_i,i=1,2,\cdots,n$ such that $h_{ij}$ is diagonalied in this frame, i.e. $h_{ij}=\lambda_{i}\delta_{ij}.$  Then we have
$$|\frac{\det(h)}{\det(g)}-(\frac{H}{n})^{n}|\leq|\lambda_1\cdots\lambda_{n}-(\frac{H}{n})^{n}|\leq n|A|^{n-1}|h_{ij}-\frac{H}{n}g_{ij}|.$$

Hence
$$
\begin{aligned}
    &\int_{N^{n}}|\frac{\det(h)}{\det(g)}-(\frac{H}{n})^{n}|dv\\
    &\leq n\int_{N^{n}}|A|^{n-1}|h_{ij}-\frac{H}{n}g_{ij}|dv\\
    &\leq n(\int_{N^{n}}|A|^{n}|h_{ij}-\frac{H}{n}g_{ij}|^2dv)^{\frac{1}{2}}
        \cdot(\int_{N^{n}}|A|^{n-2}dv)^{\frac{1}{2}}.\\
\end{aligned}
$$
Let $t_k$ be the time sequence chosen in  Proposition \ref{prop2.4}, it follows  from
(\ref{2.6}) and (\ref{2.10}) that at $t=t_k:$
\begin{equation}\int_{N^{n}}|A|^{n}|h_{ij}-\frac{H}{n}g_{ij}|^2dv
        \cdot\int_{N^{n}}|A|^{n-2}dv\mid_{t=t_k}\rightarrow 0
\end{equation}
as $k\rightarrow \infty.$ This implies
\begin{equation}\label{3.5}
\lim_{k\rightarrow \infty}\int_{N^{n}}|\frac{\det(h)}{\det(g)}-(\frac{H}{n})^{n}|dv\mid_{t=t_k}=0.
\end{equation}

Combining (\ref{3.00}) and (\ref{3.5}), we have
\begin{equation}\label{3.6}
(-1)^{\frac{n}{2}}\chi(N^n)=\lim_{k\rightarrow \infty}\frac{2}{vol(S^{n})}\int_{N^{n}}(\frac{H}{n})^{n}dv\mid_{t=t_k}.
\end{equation}
Because $n$ is even, we know $(-1)^{\frac{n}{2}}\chi(N^n)\geq 0.$ This finishes the main part of Theorem \ref{thm2}.

Clearly, if the minimal volume of $N^n$ is zero, then the Euler characteristic number $\chi(N^n)$ is zero. This follows directly from the Gauss-Bonnet-Chern formula (\ref{3.0'}). To see the converse, let $\chi(N^n)=0,$ from (\ref{3.6}) we have
\begin{equation}\label{3.7}
\lim_{k\rightarrow \infty}\int_{N^{n}}H^{n}dv\mid_{t=t_k}=0.
\end{equation}
Combining (\ref{e2.8}), it implies

\begin{equation}\label{e3.8}
 \limsup_{k\rightarrow \infty} \frac{Vol(M,t_k)}{(1+t_k)^{\frac{n}{2}}}=0.
\end{equation}

Note that (\ref{2.4}) and (\ref{ge}) implies $|Rm|(g_{t_k})\leq C t_{k}^{-1}.$ So $\frac{Cg_{t_k}}{{t_k}}$ is a sequence of metrics with sectional curvatures satisfying $|K_{Cg_{t_k}}|\leq 1$ but the volumes converges to zero as $k\rightarrow \infty.$ This shows the minimal volume of $N^n$ is zero. The proof of Theorem \ref{thm2} is completed.

To prove Theorem \ref{thm1}, we recall a result of Gromov\cite{gro82}: the simplicial volume of a compact manifold $X^n$ with negative sectional curvature is positive.  In our case, we have the simplicial volume of $M^n$ is positive, so is $N^n$ by the homotopic invariance of simplicial volume. In paper \cite{gro82}, Gromov  proved that the minimal volume is always bounded from below by the simplicial volume multiplied by a constant depending only on the dimension. Theorem \ref{thm1} follows from this result and Theorem \ref{thm2}. Finally, we mention one corollary:
\begin{cor}
Let $(M^{n},g,h)$ be an even-dimensional compact space-like manifold. Then
\begin{equation}
\frac{1}{vol(S^{n})}\int_{M^{n}}|H|^{n}\geq(-1)^{\frac{n}{2}}\frac{n^{n}}{2}\chi(M^n).
\end{equation}
Equality holds if and only if either $(M^{n},g,h)$ is hyperbolic or flat.
\end{cor}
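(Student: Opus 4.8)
The plan is to combine the limiting formula (\ref{3.6}) obtained in the course of proving Theorem \ref{thm2} with the volume estimate (\ref{e2.8}), evaluated at the initial time $t=0$. The starting point is equation (\ref{3.6}), which expresses $(-1)^{\frac{n}{2}}\chi(M^n)$ as the limit along the sequence $t_k$ of $\frac{2}{vol(S^n)}\int_{M^n}(\frac{H}{n})^n dv$. Since $(-1)^{\frac{n}{2}}\chi(M^n)$ is a fixed number independent of $t$, and since by the first monotonicity formula in Proposition \ref{prop2.3} the quantity $\int_{M^n} H^n dv$ is monotonically non-increasing in $t$, the limit value is bounded above by the value at $t=0$. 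This yields directly
\begin{equation*}
(-1)^{\frac{n}{2}}\chi(M^n)\leq \frac{2}{vol(S^n)}\frac{1}{n^n}\int_{M^n}H^n dv\Big|_{t=0},
\end{equation*}
and since $H^n\leq |H|^n$ (here $n$ is even, so in fact $H^n=|H|^n$), rearranging gives the claimed inequality at the initial metric.

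The monotonicity step is the conceptual heart of the argument, so I would emphasize it explicitly: Proposition \ref{prop2.3} shows $\frac{d}{dt}\int_{M^n}H^n dv\leq 0$, whence $\int_{M^n}H^n dv|_{t=t_k}\leq \int_{M^n}H^n dv|_{t=0}$ for all $k$. Passing to the limit in (\ref{3.6}) and multiplying through by $\frac{n^n}{2}$ then produces the stated inequality $\frac{1}{vol(S^n)}\int_{M^n}|H|^n\geq (-1)^{\frac{n}{2}}\frac{n^n}{2}\chi(M^n)$, where the integral on the left is understood with respect to the fixed initial metric $g=g(0)$.

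For the equality case, I would examine when the monotonicity inequality degenerates. Equality in $\frac{1}{vol(S^n)}\int|H|^n=(-1)^{\frac{n}{2}}\frac{n^n}{2}\chi(M^n)$ forces $\int_{M^n}H^n dv$ to be constant in $t$, so by Proposition \ref{prop2.3} both error terms must vanish for all $t$: the term $\int |\nabla H|^2 H^{n-2}dv$ and the term $\int H^n|h_{ij}-\frac{H}{n}g_{ij}|^2 dv$. The vanishing of the second term says that at every point either $H=0$ or $h_{ij}=\frac{H}{n}g_{ij}$, i.e. $h$ is totally umbilic wherever $H\neq 0$. Feeding $h_{ij}=\frac{H}{n}g_{ij}$ back into the Gauss equation (\ref{ge}) gives $R_{ijkl}=-\frac{H^2}{n^2}(g_{ik}g_{jl}-g_{il}g_{jk})$, so $M^n$ has constant negative sectional curvature $-\frac{H^2}{n^2}$, while $H=0$ combined with umbilicity and (\ref{ge}) forces $R_{ijkl}=0$, i.e. flatness. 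The main obstacle I anticipate is pinning down the global dichotomy — ruling out a mixture of a hyperbolic region and a flat region on a connected manifold. To close this, I would invoke the Codazzi equation (\ref{ce}) together with the vanishing of $\nabla H$ (from the first error term) to argue that $H$ is locally constant, hence constant on each connected component, so the manifold is globally hyperbolic when $H\neq 0$ and globally flat when $H\equiv 0$; this also confirms the limit metric is the stationary (constant-curvature) model, consistent with the umbilic condition being the fixed-point locus of the flow.
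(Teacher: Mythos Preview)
The paper states this corollary without proof, so there is no paper argument to compare against; your derivation of the inequality is correct and is precisely what the machinery of Section~3 yields: combine (\ref{3.6}) with the monotonicity of $\int_{M^n} H^{n}\,dv$ from Proposition~\ref{prop2.3} and read off the bound at $t=0$.

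Your equality analysis is fine when $H\not\equiv 0$: the vanishing of the second error term gives umbilicity on $\{H\neq 0\}$, Codazzi (a Schur-type argument) then forces $\nabla H=0$ there, a clopen argument makes $H$ a nonzero constant on all of $M$, and the Gauss equation (\ref{ge}) gives constant negative sectional curvature.

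There is, however, a genuine gap in the case $H\equiv 0$. You write ``globally flat when $H\equiv 0$'', but nothing in your argument delivers this: once $H$ vanishes identically, the monotonicity formula for $\int H^{n}\,dv$ is the trivial identity $0=0$, and neither error term carries any information about $h_{ij}$. In particular you have \emph{not} established umbilicity at points where $H=0$, so the sentence ``$H=0$ combined with umbilicity \ldots forces $R_{ijkl}=0$'' is unsupported there. What actually closes this case is the Bernstein theorem of Calabi and Cheng--Yau for complete maximal space-like hypersurfaces in $R^{n,1}$: by Proposition~\ref{prop2.1} the universal cover embeds isometrically as a complete space-like hypersurface, and $H\equiv 0$ makes it maximal, hence a hyperplane, hence $h\equiv 0$ and $(M,g)$ is flat. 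Without invoking this (or an equivalent rigidity input) the ``only if'' direction of the equality statement remains incomplete.
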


\begin{rem}
It is desirable to generalize Theorem \ref{thm2} to higher codimensional case. Namely, we may consider the manifold $(M^{n},g)$ which is locally embeddable as a space-like higher codimensional submanifold of $R^{n,m}$. In this case, a formula similar to the first one in Proposition \ref{prop2.3} can still hold:
\begin{equation}
\frac{d}{dt}\int_{M^{n}}|H|^{n}_{+}\leq0,
\end{equation}
where $|H|^{n}_{+}$ is the absolute value of the squared norm of the (time-like) mean curvature vector. This in particular implies
\end{rem}
\begin{equation}
  \limsup_{t\rightarrow \infty} \frac{Vol(M^n,t)^{\frac{2}{n}}}{1+t}\leq \frac{2}{n}\lim_{t\rightarrow \infty} (\int_{M^n}|H|^{n}_{+}dv)^\frac{2}{n}<\infty.
\end{equation}•

{\bf Acknowledgements} The first author is partially supported by NSFC 11025107, the second author by NSFC 11301190.



\begin{thebibliography}{99}
\bibitem{chern44}
S.S.~Chern, {\it A simple intrinsic proof of the Gauss-Bonnet formula for closed Riemannian manifolds}, Ann. of Math.(2) {\bf 45}~(1944), 747-752.
\bibitem{chern55}
S.S.~Chern, {\it On curvature and characteristic classes of a Riemann manifold}, Abn.  Math. Sem. Univ. Hamburg  {\bf 20}~(1955), 117-126.
\bibitem{gro82}
M.~Gromov, {\it Volume and bounded cohomology}, Inst.  Hautes. \'{E}tudes Sci. Publ. Math  ~(1982), no.~56,  5-99.
\bibitem{gro91}
M.~Gromov, {\it K\"{a}hler hyperbolicity and $L^{2}$-Hodge theory}, J. Differ. Geom.  {\bf 33}~(1991), 263-292.
\bibitem{hopf25}
H.~Hopf, {\it \"{U}ber die Curvatura integra geschlossener Hyperfl\"{a}chen}, Math. Ann.  {\bf 95}~(1925), 340-367.
\bibitem{rong93}
X.C.~Rong, {\it The existence of polarized F-structures on volume collapsed 4-manifolds}, Geom. Funct. Anal.  {\bf 3}~(1993), no. 5, 474-501.
\bibitem{zhang09}
K.~Zhang, {\it Hyperbolic structures on closed space-like manifolds}, Pac. J. Math.  {\bf 243}~(2009), no.1, 151-164.

\end{thebibliography}
\end{document}